\setlist[enumerate]{leftmargin=.5in}
\setlist[itemize]{leftmargin=.5in}
\newtheorem{master}{}
\numberwithin{master}{section}
\newtheorem{theorem}[master]{Theorem}
\newtheorem{lemma}[master]{Lemma}
\newtheorem{corollary}[master]{Corollary}
\newtheorem{proposition}[master]{Proposition}
\theoremstyle{definition}
\theoremstyle{remark}
\newtheorem{remark}{Remark}
\numberwithin{remark}{section}
\newcommand{\R}{\mathbb{R}}
\renewcommand\d[1]{%
	\mathop{}\!\mathrm{d}#1}
\newcommand{\supp}{\operatornamewithlimits{supp}}
\newcommand{\argmin}{\operatornamewithlimits{arg\,min}}
\newcommand{\dual}[1]{\left\langle #1 \right\rangle}
\newcommand{\ran}{\operatornamewithlimits{Ran}}
\newcommand\ccrule[3][black]{\textcolor[RGB]{#1}{\rule{#2}{#3}}}
\newcommand\eit{\emph{Electrical Impedance Tomography} (EIT)\gdef\eit{EIT}}
\newcommand\aet{\emph{Acousto-Electric Tomography} (AET)\gdef\aet{AET}}
\newcommand\ct{computed tomography (CT)\gdef\ct{CT}}
\newcommand\mri{magnetic resonance imaging (MRI)\gdef\mri{MRI}}
\newcommand\pde{partial differential equation (PDE)\gdef\pde{PDE}}
\newcommand{\matlab}{\textsc{Matlab}}
\newcommand{\kwave}{\textsc{k-Wave}}
\newcommand{\fenics}{\textsc{FEniCS}}
\title{Sound speed uncertainty in Acousto-Electric Tomography}
\date{June 2021}
\newif\ifimg    
\newcommand{\gfx}[2]{\ifimg\includegraphics[#1]{#2}\else\includegraphics[#1,draft]{#2}\fi}
\author{
Bj{\o}rn Christian Skov Jensen$^{1,*}$  and Kim Knudsen$^2$ \\ 
\footnotesize $^1$Department of Mathematics and Statistics, \\[-10pt]
\footnotesize University of Helsinki, 00560 Helsinki, Finland \\
\footnotesize $^2$Department of Applied Mathematics and Computer Science, \\[-10pt]
\footnotesize Technical University of Denmark, 2800 Kgs.\ Lyngby, Denmark \\
\footnotesize $^{*}$Corresponding author: bjorn.jensen@helsinki.fi
}
\begin{document}

\maketitle

\begin{abstract}
The goal in {A}cousto-{E}lectric Tomography (AET) is to reconstruct an image of the unknown electric conductivity inside an object from  boundary measurements of electrostatic currents and voltages collected while the object is penetrated by propagating ultrasound waves. This problem is a coupled-physics inverse problem.
Accurate knowledge of the propagating ultrasound wave is usually assumed and required, but in practice tracking the propagating wave is hard due to inexact knowledge of the interior acoustic properties of the object. In this work, we model uncertainty in the sound speed of the acoustic wave, and formulate a suitable reconstruction method for the interior power density and conductivity. 
We also establish theoretical error bounds, and show that the suggested approach can be understood as a regularization strategy for the inverse problem. 
Finally, we numerically simulate the sound speed variations from a numerical breast tissue model, and computationally explore the effect of using an inaccurate sound speed on the error in reconstructions. Our results show that with reasonable uncertainty in the sound speed reliable reconstruction is still possible. 

\vspace{1pc}
\noindent{\it Keywords}: 
Acousto-Electric Tomography, acousto-electric effect, Electrical Impedance Tomography,
uncertainty quantification, hybrid data tomography, variable sound speed, coupled-physics imaging, inverse problems, medical imaging

\vspace{1pc}
\noindent{\bf MSC2010}: 35R30, 65N21
\end{abstract}

\section{Introduction}
\eit{} \cite{cheney1999a,holder2010a} is a well established technology for imaging of the interior electrical conductivity in a body or object from electro-static surface measurements. Applications in medical imaging include early detection of breast cancer \cite{zou2003a}, bedside monitoring of the lung function \cite{reinartz2019a}, and  hemorrhagic stroke detection \cite{malone2014a}. The inverse problem in \eit{} is highly ill-posed due to the diffusive nature of electric signals, and consequently reconstructed images have low resolution. A potential remedy can be found in hybrid imaging techniques that exploit the interplay between different physical phenomena.

\aet{} \cite{zhang2004a} is a hybrid imaging technique combining the electro-static boundary measurements of \eit{} with  ultrasound. The aim is to provide tomographic images having much better contrast and resolution compared to images produced by \eit{} alone. 

The experimental procedure is as follows: An ultrasound wave is emitted by a transducer and propagates through the object. The acoustic pressure causes local contractions and expansions, and these small volume deformations induce a slight change of conductivity; this effect is referred to as \emph{acousto-electric interaction}, \emph{acousto-electric modulation} or the \emph{acousto-electric effect} \cite{jossinet1998phenomenology,lavandier2000experiemental}. As a consequence of the change in conductivity the electro-static boundary measurements change as well, and the inverse problem is then to obtain the conductivity from these acoustically excited EIT measurements.

Our work in this paper is inspired by the use of \aet{} for breast cancer imaging and parameters in simulations are chosen accordingly. The study is, however, conceptual and computational rather than data driven.

\aet{} assumes complete knowledge of the propagating waves through the object, and  this, in turn, requires complete knowledge of the sound speed of the object. However, in many applications, the sound speed is not fully known; only a rough estimate of the magnitude is provided and can be used in the inversion framework. For instance, in breast tissue it is reasonable to model the sound speed to be constant with spatial variations up to 10\%  \cite{duck2013physical}.

In this paper we focus on the following question: {\it To what extent can we trust  \aet{}, when the sound speed is uncertain.} To answer the question we adapt the reconstruction framework of \cite{jensen2019feasibility} (with few and important updates) that relies on a decomposition of the inverse problem into two separate optimization problems. We show theoretically that the approach is consistent in the sense of classical regularization theory: in the limit of vanishing uncertainty and errors we perfectly solve the problem. Moreover, we numerically model the uncertainty in the sound speed. Inspired by potential applications in breast cancer detection, we adapt the tissue model in \cite{reiser2013validation} to numerically produce such sound speed variations. Then we use the complete inversion framework to quantify the impact of sound speed uncertainty on the interior power density and conductivity reconstructions.  

 AET was considered experimentally in \cite{zhang2004a}, however, since the measurements have very low signal-to-noise ratio, the technology is still in its infancy and many technological challenges need to be solved. Mathematically, the problem is fairly well understood  \cite{Ammari_Bonnetier_Capdeboscq_Tanter_Fink_2008,BalGuoMonard:2014,Bal2012,Capdeboscq_Fehrenbach_Gournay_Kavian_2009,Kuchment_Kunyansky_2010,Kuchment_Kunyansky_2011,monard2012inverse} and several numerical algorithms have been discussed \cite{jensen2018acousto,adesokan2019nonlinear,bal2013inverse,hubmer2018limited,li2019levenbergmarquardt,Roy-Borzi}. For an introduction to the mathematical theory pertaining to both \aet{} and related problems we refer to the book \cite{alberti2018lectures}. While to the best of our knowledge \aet{} has not seen any study related to uncertainty quantification prior to this work, we should mention that there is some work on the unknown sound speed problem \cite{oksanen2014photoacoustic,Tick_2019} for related hybrid problems \emph{Photo-Acoustic tomography} and \emph{Thermo-Acoustic tomography}. We believe that the quantitative approach developed here applies to these modalities as well.

The outline of this paper is as follows. In Section 2 we describe the \aet{} model, including the sound speed modeling. In Section 3 we obtain continuity results for the \aet{} data with respect to the sound speed variations. In Section 4 we recall the optimization problem formulations of the involved inverse problems; first recovery of the power density and from there recovery of the conductivity. We further prove that our approach forms a proper regularization strategy. In Section 5 we describe the numerical implementation of the forward models. Moreover, we describe the procedure used for generating random sound speed samples with structures for our numerical computations. In section 6 we show reconstructions and describe the numerical results, and Section 7 contains discussion and concluding remarks.

\section{Modeling Acousto-Electric Tomography}\label{sec:modeling}
The modeling of \aet{} follows \cite{jensen2019feasibility}. In $\R^d,\; d =2,3,$ an ultrasound wave generated by a source $S(x,t)$ is modelled by the scalar wave equation
\begin{gather} \label{eq:p}
    \left\{\begin{aligned}
        \left(\partial_t^2 -c^2 \Delta\right)p &= S && \text{in $ \R^d \times \R_+ $}, \\
        p\vert_{t=0} = \partial_tp\vert_{t=0} &= 0 && \text{on $ \R^d $},
    \end{aligned}\right.
\end{gather}
where $ c(x)$ is the spatially dependent \emph{sound speed}.
We assume that the source $ S $ is fully known, smooth and compactly supported, and that $c \in C^\infty(\R^d)$ is bounded, and bounded from below, by a positive constant. (The smoothness assumptions are not essential nor optimal.) Then  \eqref{eq:p} has a unique weak solution, which given the smoothness of the coefficients is in fact $ C^\infty $, see e.g.\ \cite[Sec. 7.2]{evans2010partial}.

The electric conductivity is modeled by a real-valued function $ \sigma $ in a bounded and open subset $ \Omega \subset \R^d $ with smooth boundary $ \partial\Omega $. The function $ \sigma $ belongs to $ L^\infty(\Omega) $ and is bounded from below by a positive constant. When a current flux $ f $ is applied on $ \partial\Omega, $ an electric potential $u$ is generated in $\Omega,$ and assuming no interior sources or sinks of charge, the electrical potential $u $ satisfies the \pde{}
\begin{equation} \label{eq:u}
    \left\{\begin{aligned}
        -\nabla\cdot\sigma\nabla u &= 0 && \text{in $ \Omega $}, \\
        \sigma\partial_\nu u &= f && \text{on $ \partial\Omega $}.
    \end{aligned}\right.
\end{equation}
The vector $ \nu $ denotes the outward pointing unit normal on $ \partial\Omega $ and $ \sigma \partial_\nu u = \nu \cdot \sigma \nabla u $ is the normal component of the current field $J=\sigma\nabla u$. The compatibility condition $ f \in L_\diamond^2(\partial \Omega) = \{ v \in L^2(\partial\Omega) : \int_{\partial\Omega}v\d s = 0\} $ (corresponding to a conservation of charge) guarantees that \eqref{eq:u} has a weak solution $ u \in H^1(\Omega) $ unique up to a constant, which is fixed by selecting the solution $u$ for which $ u\vert_{\partial\Omega} = g \in L_\diamond^2(\partial\Omega) $ \cite{evans2010partial}, $ g $ describing the voltage potential at the boundary, which is measurable via electrodes.

When the wave $p$ propagates through $\Omega,$ the conductivity is perturbed due the acousto-electric effect. The  perturbed conductivity $\sigma_\ast (x,t)$, now temporally dependent, is described by the first order model \cite{jossinet1998phenomenology,lavandier2000experiemental}
\begin{equation} \label{eq:coupling}
    \sigma_\ast = \sigma(1+\eta p),
\end{equation}
where $ \eta > 0 $ is called the acousto-electric coupling constant and is assumed to be known.

Substituting $ \sigma_\ast $ for $ \sigma $ in \eqref{eq:u} yields the PDE
\begin{equation} \label{eq:up}
    \left\{\begin{aligned}
        -\nabla\cdot(\sigma_\ast\nabla u_\ast) &= 0 && \text{in $ \Omega $}, \\
        \sigma_\ast\partial_\nu u_\ast &= f && \text{on $ \partial\Omega $}.
    \end{aligned}\right.    
\end{equation}
characterizing for fixed $ t\in\R_+ $ the resulting time-dependent electrical potential $ u_\ast(x,t) $. Again, $ u_\ast $ is unique up to a (time-dependent) constant that is fixed by requiring $ u_\ast\vert_{\partial\Omega}(\cdot,t) = g_\ast(\cdot, t)  \in L^2_\diamond(\partial\Omega) $ for each $ t $; $ g_\ast $ being the boundary voltage potential for $ u_\ast $.

The inverse problem of AET is now to reconstruct $\sigma$ from knowledge of several triplets $(f,g,g_\ast)$ corresponding to different choices of $f$. As a first step in order to transform the boundary functions to interior information, the product of the current density $ f $ and the difference in boundary potentials $g_\ast-g$ is integrated along the boundary. This defines the time signal 
\begin{align*}
    I(t) &= \int_{\partial \Omega} f\cdot(g_\ast(\;\cdot\;,t) - g) \d s.
\end{align*}
The signal describes the time evolution of the difference in power for the system under the influence of the wave perturbation. The map $ I(t) $ is illustrated in the cartoon-like Figure \ref{fig:I}. The blue curve illustrates $I(t)$ for the depicted conductivity phantom with a circular inclusion. The gray curve is for reference $I(t)$ for the homogeneous background conductivity. Note that the signal difference is large when the wave is in contact with the inclusion.
\begin{figure}[bth]
    \centering
    \gfx{width=1.0\textwidth}{data-timeseries-edt.png}
    \caption{Upper plot illustrates the perturbed conductivity $\sigma_\ast$ due to a single propagating acoustic wave at key times. The lower plot illustrates the corresponding time signal $ I(t) $ for the conductivity phantom with a circular inclusion (blue) and for a homogeneous reference (gray).}
    \label{fig:I}
\end{figure}
Using the function $I$, we can now pose the inverse problem of AET as follows: Given $I$ for several boundary conditions $f$ and wave sources $S,$ reconstruct the conductivity $\sigma.$ Note that when multiple boundary currents $f$ are used we could also consider cross-terms by integrating $f$ to a $g-g_\ast$ coming from different boundary currents. This approach might stabilize the reconstruction problem; we will instead use three boundary conditions; more than the two required for reconstruction by theory \cite{alberti2018lectures}.

In solving the inverse problem, the crucial intermediate object is the interior power density $H(x),x\in\Omega,$ for \eqref{eq:u} given by
\begin{equation} \label{eq:H}
    H = \sigma|\nabla u|^2.
\end{equation}
where $ u = u[\sigma] $ is the solution of \eqref{eq:u} as a function of the conductivity $ \sigma $. The power density shows up by considering the weak forms of \eqref{eq:u} and \eqref{eq:up}, each with the solution of the other taken as a test function therein.
\begin{align*}
    \int_{\partial\Omega} fg_\ast\d s &= \int_\Omega \sigma\nabla u\cdot\nabla u_\ast\d x, \\
    \int_{\partial\Omega} fg\d s &= \int_\Omega \sigma_\ast\nabla u_\ast\cdot\nabla u\d x
\end{align*}
Taking their difference and substituting in \eqref{eq:coupling}
\begin{align}\label{eq:I-exact}
    I = \int_{\partial\Omega} fg_\ast - fg\d s = -\eta\int_\Omega p\sigma\nabla u\cdot\nabla u_\ast\d x.
\end{align}
The approximation $u_\ast \approx u$ then yields
\begin{align}
    I &\approx  -\eta\int_\Omega p\overbrace{\sigma|\nabla u|^2}^{H}\d x. \label{eq:I-linearization}
\end{align}
We thus pose the linear inverse problem to find $H$ from the equation 
\begin{align} 
    I = KH \label{eq:I-linear}
\end{align}
with $K$ denoting the integral operator with kernel $ -\eta p $
\begin{align} 
    (KH)(t) = -\eta\int_\Omega p(x,t)H(x)\d x. \label{eq:K}
\end{align}
Such an integral operator $ K \colon L^2(\Omega) \to L^2(0,T) $ is compact when the integration kernel belong to $ L^2(\Omega \times (0,T)) $; as the wave $ p $ is smooth, this is indeed the case here.

The reconstruction is decomposed in two steps: First recover the power density $ H $ by solving $ \eqref{eq:I-linear} $ for each applied boundary current;  second recover $ \sigma $ from several $ H $ corresponding to different choices of $ f $.
\begin{remark} \label{rem:p-comp-set}
    Equations \eqref{eq:I-linear}--\eqref{eq:K} illustrate the importance of $ p(x,t) $ in the reconstruction. The generated waves must be a sufficiently expressive set that it captures all the facets of $ H $. In other words, the size of the $ \ker(K) $ is determined by the set of waves; of course we cannot hope to recover components of $ H $ in $ \ker(K) $.
\end{remark}
There are in general multiple possible sources of errors in \aet{}. The most obvious include measurement errors, linearization errors, and model errors. 
In this work we are interested in exploring the effect of uncertainty in the sound speed $c$.
We do so by considering the exact sound speed to be unknown to us, however, we assume prior knowledge of the mean value of the sound speed in the reconstruction approach. Using a wrong sound speed corresponds to having a model error. 

We consider sound speeds $ c $ and $ \widetilde{c} $, where the former represents the true sound speed and the latter an approximation available from prior knowledge. Numerically, we model $c$ 
by
\[ c(x) = c_{\rm{bg}}+ c_{\rm{var}}(x), \]
where $c_{\rm{var}},$ carries information about the uncertain variations in $c$.
We take $\tilde{c}= c_{\rm{bg}}$  constant in our studies.

\section{Stability of the forward operator} \label{sec:wave}
In this section, we demonstrate that, under certain assumptions, the wave $ p $ and forward operator $ K $ are continuously dependent on the sound speed $ c $. This guarantees that errors in the data are controlled by errors in the sound speed. The results are likely well-known, but with the lack of a proper reference, we indicate the overall ideas. 

Let $ m= \left\lceil\frac{d+1}{2}\right\rceil $. We define the admissible set of sound speeds as the set of smooth functions bounded from above and below by positive constant $ \lambda \in (0,1) $ 
\[ \mathcal A_\lambda := \left\{ c \in C^\infty(\R^d) : \lambda \leq c \leq \lambda^{-1};\; \|c\|_{C^k(\R^d)} <  \lambda^{-1}, k \leq m+1 \right\}. \]
A positive lower bound on $ c $ ensures that $ \partial_t^2 - c^2\Delta $ is a uniformly hyperbolic operator. The upper bound yields finite propagation speed for the wave $p$ \cite{evans2010partial}; the bounds on the $C^k$ norms allow uniform estimates inside $\mathcal A_\lambda .$ We assume in the following that $\lambda$ is fixed and that $c,\widetilde c \in \mathcal A_\lambda.$ 

Put $ h = \widetilde{c}-c $ and $ q = \widetilde{p} - p $. Clearly $ q $ solves the \pde{}
\begin{align} \label{eq:q}
    \left\{\begin{aligned}
        \left(\partial_t^2 -\widetilde c^2\Delta\right)q &= h(\widetilde{c} + c)\Delta p, && \text{in $ \R^d\times\R_+ $}, \\
        q\vert_{t=0} = \partial_tq\vert_{t=0} &= 0, && \text{on $ \R^d $}.
    \end{aligned}\right.
\end{align}
Note that, due to the finite speed of propagation, $ \supp\left\{h(\widetilde{c}+c)\Delta p(\cdot,t)\right\} $ is compact and so is the support of $q.$ We denote by $B\subset \R^d$ a large ball that contains the support of both $h(\widetilde{c}+c)\Delta p$ and $q$ for all $t \in (0,T).$ For all $ m $, the regularity of $q$ can now be estimated  \cite[p.415]{evans2010partial} by
\begin{equation} \label{eq:reg:evans}
    \operatornamewithlimits{ess\ sup}_{0\leq t\leq T}\sum_{j=0}^{m+1}\left\|\partial_t^jq(\cdot,t)\right\|_{H^{m+1-j}(B)} \leq C\sum_{j=0}^m\left\|h(\widetilde{c}+c)\Delta\partial_t^jp\right\|_{L^2(0,T; H^{m-j}(B))},
\end{equation}
where $C$ depends on the $C^{m+1}$-norm of $c$; thus by $ c \in \mathcal A_\lambda $ it depends only on $ \lambda $. This leads to:
\begin{proposition} \label{thm:reg:q}
    The wave difference is bounded by
    \begin{equation} \label{eq:reg:q}
        \|\widetilde{p}-p\|_{L^\infty(0,T; H^{m+1}(B))} \leq C\|\widetilde{c}-c\|_{H^m(B)},
    \end{equation}
    where $ C $ does not depend on $ \widetilde{c} $.
\end{proposition}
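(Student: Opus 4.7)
The plan is to extract the $ j=0 $ term from the left-hand side of the energy estimate \eqref{eq:reg:evans}, which is exactly $ \|q\|_{L^\infty(0,T;H^{m+1}(B))} $, and then bound each summand on the right-hand side by $ \|h\|_{H^m(B)} $ times a constant depending only on $ \lambda $, $ T $, $ S $ and $ B $. Once this reduction is achieved, the proposition follows immediately.

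The central step will be a Sobolev multiplier inequality: for $ m > d/2 $ and $ 0 \le s \le m $ one has
\[ \|hv\|_{H^{s}(B)} \leq C_s \|h\|_{H^m(B)}\,\|v\|_{H^{s}(B)}. \]
The choice $ m = \lceil (d+1)/2\rceil $ guarantees $ m > d/2 $ for $ d=2,3 $, so this estimate applies for every $ s = m-j $, $ j=0,\dots,m $. Applying it with $ v = (\widetilde{c}+c)\Delta\partial_t^j p $ peels the factor $ h $ out of each right-hand summand in \eqref{eq:reg:evans}. I would then argue that the remaining norm $ \|(\widetilde{c}+c)\Delta\partial_t^j p\|_{L^2(0,T; H^{m-j}(B))} $ is bounded uniformly in $ \widetilde{c} $: since $ c,\widetilde{c}\in\mathcal A_\lambda $, the sum $ \widetilde{c}+c $ is uniformly bounded in $ C^{m+1}(\R^d) $, so pointwise multiplication by it is bounded on $ H^{m-j}(B) $ with norm controlled only by $ \lambda $. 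The remaining factor $ \|\Delta\partial_t^j p\|_{L^2(0,T; H^{m-j}(B))} $ depends only on $ c $ and $ S $, and is finite by applying the same energy estimate of \cite[p.\,415]{evans2010partial} to \eqref{eq:p}, using the smoothness and compact support of $ S $ and $ c\in\mathcal A_\lambda $. Summing over $ j $ closes the argument and yields a constant independent of $ \widetilde{c} $.

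The main obstacle is justifying the multiplier estimate at this precise level of regularity. The threshold $ m > d/2 $ is sharp for $ H^m(\R^d) $ to act by pointwise multiplication on all $ H^s(\R^d) $ with $ 0\le s\le m $; the choice $ m=\lceil(d+1)/2\rceil $ is tailored to sit at or above this threshold for $ d=2,3 $. A minor bookkeeping issue is that $ h $ need not be supported in $ B $, but since the other factor $ (\widetilde{c}+c)\Delta\partial_t^j p $ is supported in $ B $ by finite speed of propagation, only the restriction of $ h $ to $ B $ enters the product, and $ \|h\|_{H^m(B)} $ is the correct norm to use. A secondary technical point is confirming that $ \partial_t^j p $ has the required Sobolev regularity for $ j\le m $; this is obtained by differentiating \eqref{eq:p} in time and iterating the same energy estimate, using that $ c $ is smooth and $ S $ is smooth with compact support so that the forcing terms $ \partial_t^j S $ are equally well-behaved.
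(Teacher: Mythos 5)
Your proof is correct and follows essentially the same route as the paper: both extract the $j=0$ term from the energy estimate \eqref{eq:reg:evans} and peel off $h$ via Sobolev multiplication (the paper first enlarges each $H^{m-j}$ norm to $H^m$ and invokes the Banach algebra property of $H^m$ for $m>d/2$, which is just a repackaging of your multiplier inequality), then bounds the remaining factor uniformly using $c,\widetilde c\in\mathcal A_\lambda$ and the smoothness of $p$. Your handling of the uniformity in $\widetilde c$ via the $C^{m+1}$ bounds in $\mathcal A_\lambda$ is, if anything, slightly more explicit than the paper's.
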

\begin{proof}
    Observe that, for any positive index $k,$ $ \|\cdot\|_{L^2(0,T;H^{m-k}(B))} \leq \|\cdot\|_{L^2(0,T;H^m(B))},$ and moreover that for $ m > d/2 $, $ H^m(\omega) $ is a Banach algebra\cite[4.39]{adams2003sobolev}.
    Applying these observations to \eqref{eq:reg:evans} yield
    \begin{equation*}
        \|\widetilde{p}-p\|_{L^\infty(0,T; H^{m+1}(B))} \leq C\|\widetilde{c}-c\|_{H^m(B)}\sum_{k=0}^m\|(\widetilde{c}+c)\Delta\partial_t^kp\|_{L^2(0,T;H^m(B))},
    \end{equation*}
    for some $ C > 0 $. Since $ \widetilde{c}+c \leq 2\lambda^{-1} $ and $ p \in C^\infty $ we get \eqref{eq:reg:q}.
\end{proof}
\begin{remark}
By boundedness of $ \Omega $ we can without loss of generality assume $ \Omega \subseteq B $.
\end{remark}
The established continuity for the wave upon the sound speed yields operator continuity for the forward operator in the following sense. Like the integration kernel of $ K $ derives from $ p $ and $ c $, we consider $ \widetilde K $ as the operator with integration kernel coming from $ \widetilde p $ and $ \widetilde c $.
\begin{proposition} \label{prop:op-cont}
The operator difference $ \widetilde{K} - K \colon L^2(\Omega) \rightarrow L^2(0,T)$ is bounded by
\[ \|\widetilde{K} - K\| \leq C\|\widetilde{c} - c\|_{H^m(B)}, \]
where $ C $ does not depend on $ \widetilde{c} $. 
\end{proposition}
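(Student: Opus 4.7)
The plan is to unpack the operator difference directly from the definition of $K$ in \eqref{eq:K}, dualize against $H$ with Cauchy--Schwarz, and then reduce everything to the wave stability estimate \eqref{eq:reg:q}.

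First, for any $H \in L^2(\Omega)$ and $t\in(0,T)$, I would write
\[
(\widetilde K - K)H(t) = -\eta\int_\Omega (\widetilde p(x,t) - p(x,t))\, H(x)\d x,
\]
and apply Cauchy--Schwarz on $\Omega$ to obtain
\[
|(\widetilde K - K)H(t)| \leq \eta\, \|\widetilde p(\cdot,t) - p(\cdot,t)\|_{L^2(\Omega)} \|H\|_{L^2(\Omega)}.
\]
Squaring and integrating in $t$, and bounding the time integral by $T$ times the essential supremum, gives
\[
\|(\widetilde K - K)H\|_{L^2(0,T)} \leq \eta\, T^{1/2}\, \|\widetilde p - p\|_{L^\infty(0,T;L^2(\Omega))} \|H\|_{L^2(\Omega)}.
\]

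Next, since $\Omega \subseteq B$ and $H^{m+1}(B)$ embeds continuously into $L^2(B)$, I would dominate the $L^\infty(0,T;L^2(\Omega))$ norm by the $L^\infty(0,T;H^{m+1}(B))$ norm (up to a constant depending only on $B$). This puts Proposition~\ref{thm:reg:q} directly at our disposal, yielding
\[
\|\widetilde p - p\|_{L^\infty(0,T;L^2(\Omega))} \leq C\, \|\widetilde c - c\|_{H^m(B)}
\]
with $C$ independent of $\widetilde c$. Absorbing $\eta$, $T^{1/2}$ and the embedding constant into $C$ and taking the supremum over $H$ with $\|H\|_{L^2(\Omega)}=1$ gives the claimed operator norm bound.

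There is essentially no hard step left: all the analytic work lives in Proposition~\ref{thm:reg:q}, and the present statement is a short duality-plus-embedding argument. The only thing to be slightly careful about is making sure that the constant $C$ truly does not depend on $\widetilde c$ — this is inherited from Proposition~\ref{thm:reg:q}, whose constant depends only on $\lambda$ (through $\mathcal A_\lambda$), on $T$, on $\eta$, on $B$, and on the fixed true wave $p$, none of which involve $\widetilde c$.
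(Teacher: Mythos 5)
Your proof is correct and follows essentially the same route as the paper: Cauchy--Schwarz on $\Omega$, the factor $\eta\sqrt{T}$ from integrating in time, and then Proposition~\ref{thm:reg:q} via $\Omega\subseteq B$ and the trivial embedding of $H^{m+1}(B)$ into $L^2(B)$. The paper's version is just a more compressed rendering of the identical argument, so there is nothing to add.
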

\begin{proof}
    By the Cauchy-inequality
    \begin{gather*}
    \frac{\|(\widetilde{K}-K)H\|_{L^2(0,T)}^2}{\|H\|_{L^2(\Omega)}^2} = \frac{\eta^2\!\int_0^T\left[\int_\Omega (\widetilde{p}-p)H\d x\right]^2\!\!\!\d t}{\|H\|_{L^2(\Omega)}^2} \leq \eta^2T \|\widetilde{p}-p\|_{L^\infty(0,T;L^2(\Omega))}^2,
    \end{gather*}
    which shows $ \|\widetilde{K}-K\| \leq \eta\sqrt{T}\|\widetilde{p}-p\|_{L^\infty(0,T;L^2(\Omega))} $. Proposition \ref{thm:reg:q} now gives the estimate.
\end{proof}

\section{Inversion procedure}
In this section, we introduce the applied inversion procedure. The problem is dealt with in two parts, both handled as regularized minimization problems. First, the power densities are reconstructed from the measured differences in power $ I(t) $ by a standard least-squares approach, and second, the conductivity is reconstructed from the recovered power densities. We elaborate on the approach below.

\subsection{Reconstruction of the power density}

To reconstruct the power densities from \eqref{eq:I-linear} we use a regularized least squares approach \cite{jensen2019feasibility}. However, since the actual sound speed is known only approximately, we suggest to use the approximation $\widetilde c$ and the derived operator $ \widetilde K $ in place considering
\begin{equation} \label{eq:H-opt:cont}
    \argmin_{H\in L^2(\Omega)}\mathcal J_1(H), \quad \mathcal J_1(H) = \frac12\|\widetilde K H - I\|_{L^2(0,T)}^2 + \frac{\beta}{2}\|H\|_{L^2(\Omega)}^2.
\end{equation}

In contrast to \cite{jensen2019feasibility}, we discretize \eqref{eq:H-opt:cont} using a finite element basis $ \{\phi_j\}_{1\leq j\leq N} $. The regularization term is then discretized as
\begin{equation*}
    \|\mathbf L^T\mathbf v\|_2^2 = \|\mathbf v\|_\mathbf{M}^2 \approx \|v\|_{L^2(\Omega)}^2,
\end{equation*}
where $ \mathbf M = \mathbf L\mathbf L^T $ is the mass matrix for the finite element basis, and $ \mathbf L $ the Cholesky factor of $ \mathbf M. $ The vector $ \mathbf v $ is the coefficient vector for the finite element discretization of the continuous function $ v(x) $, and $\|\cdot\|_2$ is the usual Euclidean 2-norm. The finite dimensional regularized least squares problem therefore takes the form
\begin{equation} \label{eq:H-opt}
    \argmin_{\mathbf H \in \R^N}J_1(\mathbf H), \quad J_1(\mathbf H) = \frac{1}{2}\|\mathbf K\mathbf H - \mathbf I\|_2^2 + \frac\beta2\|\mathbf L^T\mathbf H\|_2^2,
\end{equation} 
where $ \mathbf K $ is the discretization of the operator $ K $ in \eqref{eq:I-linear}. $ \mathbf K $ has the form
\begin{equation*}
    (\mathbf K)_{ij} = -\eta\int_\Omega \widetilde p(\cdot,t_i)\phi_j\d x,
\end{equation*}
with $ \{t_i\}_{0\leq i \leq m} $ a uniform time-discretization; i.e. $ 0 = t_0 < t_1 < \dots < t_m = T $ and $ t_i - t_{i-1} = \Delta t $ for all $ 1\leq i\leq m $. Because the time-discretizing is uniform we may neglect the scaling coefficient, which would appear from discretizing the $ L^2(0,T) $-norm.
\begin{remark} \label{rem:H-opt-beta}
    To eliminate effects not relevant to our study, we solve our problem for various values of $ \beta $ comparing the reconstruction to the true power density in order to choose a close to optimal $ \beta $. 
\end{remark}

\subsection{Reconstruction of the conductivity}

To reconstruct the conductivity from the formerly reconstructed power densities we follow the general structure of the approach outlined in \cite{jensen2018acousto}; with some modifications. In particular we draw inspiration from \cite{harhanen2015edge} and improve on the approach by rewriting it into a preconditioned linear problem to which we apply a preconditioned conjugate gradient algorithm \cite[p.15]{barrett1994templates}.

We consider in the following only a single power density datum, denoted by $ z $, but note that the problem extends naturally for multiple by summing the separate data fidelity terms. The $ L^1 $--\textup{TV} optimization problem is
\begin{equation} \label{eq:func}
    \argmin_{\sigma}\mathcal J_2(\sigma), \quad \mathcal J_2(\sigma) = \|H[\sigma] - z\|_{L^1(\Omega)} + \gamma|\sigma|_\text{TV},
\end{equation}
with an $ L^1(\Omega) $ data fidelity term and total variation regularization. A minimizer for this problem is known to exist \cite{jensen2018acousto}.

Linearizing the power density with respect to the conductivity we write $ H[\sigma + \kappa] = H[\sigma] + H'[\sigma]\kappa $, where
\begin{align} \label{eq:H-linear}
    H'[\sigma]\kappa = \kappa|\nabla u[\sigma]|^2 + 2\sigma\nabla u[\sigma]\cdot\nabla u'[\sigma]\kappa,%
\end{align}
and $ u'[\sigma]\kappa $ is the Fréchet derivative of $ u[\sigma] $ in direction $ \kappa $. Substituting this linearization into \eqref{eq:func} one finds that the optimality condition becomes approximately that of the following weighted quadratic functional
\begin{equation} \label{eq:func-linear}
    \mathcal J_\sigma(\kappa) = \frac{1}{2}\int_\Omega w[\sigma]|H'[\sigma]\kappa - z_\sigma|^2\d x + \frac\gamma2\int_\Omega w_0[\sigma]|\nabla(\sigma+\kappa)|^2\d x,
\end{equation}
where $ w[\sigma] = |H[\sigma]-z|^{-1} $, $ z_\sigma = z - H[\sigma] $ and $ w_0[\sigma] = |\nabla \sigma|^{-1} $ and the absolute values $ |\cdot| $ are smoothened close to zero, i.e. $ |\cdot| \approx \sqrt{|\cdot|^2 + \tau^2} $ for a small $ \tau > 0 $; see \cite{jensen2018acousto}. Instead of tackling \eqref{eq:func}, we take steps by iteratively minimizing \eqref{eq:func-linear} and then updating $ \sigma $ and computing the new weights.

Discretizing \eqref{eq:func-linear}, we obtain the quadratic
\begin{equation} \label{eq:func-lin-disc}
    J_{\boldsymbol{\sigma}}(\boldsymbol{\kappa}) = \frac12\boldsymbol{\kappa}^T\left(\mathbf W^T\mathbf M_w\mathbf W + \gamma \mathbf K_{w_0}\right)\boldsymbol{\kappa} - \boldsymbol{\kappa}^T\left(\mathbf W^T\mathbf M_w\mathbf z_\sigma - \gamma \mathbf K_{w_0}\boldsymbol{\sigma}\right) + \textit{constant},
\end{equation}
where $ \{\phi_j\}_{1\leq j \leq n} $ is a finite element basis, and
\[
    (\mathbf M_w)_{ij} = \int_\Omega w[\sigma]\phi_i\phi_j\d x, \quad (\mathbf K_{w_0})_{ij} = \int_\Omega w_0[\sigma]\nabla\phi_i\cdot \nabla\phi_j\d{x}
\]
and $ \mathbf W $ is the discretization of the linear map $ \kappa \mapsto H'[\sigma]\kappa $. This discretization is arrives by considering the variational form of \eqref{eq:H-linear} and is given by $ \mathbf W = \mathbf M^{-1}\left(\mathbf M_u - 2\mathbf W_{\sigma,u}\mathbf K_\sigma^{-1}\mathbf L_u\right) $, where
\begin{gather*}
    (\mathbf M)_{ij} = \int_\Omega \phi_i\phi_j\d x, \quad (\mathbf M_u)_{ij} = \int_\Omega |\nabla u[\sigma]|^2\phi_i\phi_j\d x, \quad (\mathbf W_{\sigma,u})_{ij} = \int_\Omega \phi_i\sigma\nabla u[\sigma]\cdot\nabla\phi_j\d x, \\
    (\mathbf K_\sigma)_{ij} = \int_\Omega \sigma\nabla\phi_i\cdot\nabla\phi_j\d x, \quad \text{and} \quad (\mathbf L_u)_{ij} = \int_\Omega \phi_j\nabla u[\sigma]\cdot\nabla \phi_i\d x.
\end{gather*}
The $ \mathbf K_\sigma^{-1}\mathbf L_u $ factors enters from the discretization of $ \kappa \mapsto u'[\sigma]\kappa $.

The minimization of \eqref{eq:func-lin-disc} has the first order optimality condition
\begin{equation*}
    \left(\mathbf W^T\mathbf M_w\mathbf W + \gamma\mathbf K_{w_0}\right)\boldsymbol{\kappa} = \mathbf W^T\mathbf M_w\mathbf z_\sigma - \gamma\mathbf K_{w_0}\boldsymbol{\sigma}.
\end{equation*}
Following the idea in \cite{harhanen2015edge}, we consider a positive definite perturbation of $ \mathbf K_{w_0} $ and for a small $ \epsilon > 0 $ take the Cholesky factorization $ \mathbf L_0^{\phantom{,}}\mathbf L_0^T = \mathbf K_{w_0} + \epsilon \mathbf I $. Substituting this in the above we obtain
\begin{equation*}
    \left(\mathbf L_0^{-1}\mathbf W^T\mathbf M_w\mathbf W\mathbf L_0^{-T} + \gamma \mathbf I\right)\tilde{\boldsymbol{\kappa}} = \mathbf L_0^{-1}\mathbf W^T\mathbf M_w\mathbf z_\sigma - \gamma\mathbf L_0^T\boldsymbol{\sigma},
\end{equation*}
where $ \tilde{\boldsymbol{\kappa}} = \mathbf L_0^T\boldsymbol{\kappa} $. Setting $ \gamma = 0 $ we obtain the preconditioned linear problem
\begin{equation} \label{eq:opt-precond}
    \mathbf L_0^{-1}\mathbf W^T \mathbf M_w\mathbf W\mathbf L_0^{-T}\tilde{\boldsymbol{\kappa}} = \mathbf L_0^{-1}\mathbf W^T\mathbf M_w\mathbf z_\sigma.
\end{equation}
We thus minimize \eqref{eq:func-lin-disc} by solving \eqref{eq:opt-precond}, with the preconditioned conjugate gradient algorithm. This algorithm only needs the evaluation of $ \mathbf L_0^{\phantom{,}}\mathbf L_0^T $, so in practice we neither need to compute the Cholesky factor nor its inverse.

\subsection{Regularization strategy}
In \eqref{eq:H-opt:cont} the correct operator $K$ was replaced by the $\widetilde K$ due to the uncertainty in the sound speed. Furthermore, the data $I$ might be corrupted by noise. We are interested in the behaviour of the solution in the limit of both vanishing model uncertainty and vanishing noise. This is a general question for inverse problems with model errors and data noise. To emphasize the nature of the problem, we restate it in an abstract setting and approach the question in the spirit of classical regularization theory \cite{EnglHankeNeubauer1996,kirsch1996introduction}. 
The following convergence result has independent interest; the result might be available in the literature, but in the lack of a proper reference we give the details. 
\begin{theorem}  \label{thm:reg-strat}
    Let $\delta>0$ and $ A,A^\delta \colon X \rightarrow Y $ be linear and compact operators between Hilbert spaces $X,Y.$ Suppose further that $Ax=y$ for $x\in X$ and $y \in Y,$ and that  $ \|A - A^\delta\| < \delta,$  $ \|y-y^\varepsilon\|_Y < \varepsilon.$ Denote for $\beta >0$ by $ R_\beta^\delta = ((A^\delta)^\ast A^\delta + \beta\mathcal I_X)^{-1}(A^\delta)^\ast $ the Tikhonov regularized inverse of $A^\delta.$
    \begin{itemize}
        \item[i)] If $ x \in {\ran(A^\ast)}, $ i.e.\ $ A^\ast  {w} = x, $ for some $w\in Y,$ take $\beta = \beta(\delta,\varepsilon)$ such that 
        $ \delta/\sqrt{\beta(\delta,\varepsilon)} \to 0 $, $ \varepsilon/\sqrt{\beta(\delta,\varepsilon)} \to 0 $ and $ \beta(\delta,\varepsilon) \to 0 $ as $ \delta,\varepsilon \to 0.$ Then
        \[
            \|x - R_\beta^\delta y^\varepsilon\|_X \leq \frac{\delta}{\sqrt{\beta}}\|x\|_X + \sqrt{\beta}\|w\|_Y + \delta\|{w}\|_Y + \frac{\varepsilon}{\sqrt{\beta}} \to 0 \quad \text{as $ \delta,\varepsilon \to 0 $,} 
        \]
        \item[ii)] If $ x \in \overline{\ran(A^\ast)} $ there exist $ \beta(\delta,\varepsilon) \rightarrow 0  $ as $\delta,\varepsilon \rightarrow 0$ such that 
        \[
            \|x - R_{\beta}^{\delta} y^{\varepsilon}\|_X \to 0  \quad \text{as $\delta,\varepsilon \rightarrow 0$ .}
        \]
    \end{itemize}
\end{theorem}%

\begin{remark}
    The assumption $x \in \overline{\ran(A^\ast)}$ can often be interpreted as a smoothness assumption on the true solution $x$ \cite{kirsch1996introduction}. If $x\not \in \overline{\ran(A^\ast)},$ only the projection of $x$ onto $\overline{\ran(A^\ast)}$ is recovered in the above limits; that is, we can never recover components of $x \in \overline{\ran(A^\ast)}^\perp = \ker A$.
\end{remark}

\begin{remark}
    Assume for instance that $ \delta \propto \varepsilon^\gamma $, $ \gamma > 0 $, asymptotically as they vanish, then $ \beta(\delta,\varepsilon) = \delta^a\varepsilon^b $, where $ 0<a,b<2 $ solves $ \frac{2-a}b > \gamma $ and $ \gamma > \frac{a}{2-b} $, works. For simplicity, taking $ a = b = \frac1k $ all $ k > \frac{\gamma+1}{2} $ are solutions.
\end{remark}

To this end we first establish the following operator bound.
\begin{lemma} \label{lem:reg-strat}
    Let $ E = ((A^\delta)^\ast A^\delta + \beta\mathcal I_X)^{-1} $ and $ R_\beta^\delta $ be as in the theorem above, then $ \|E\| \leq \beta^{-1} $ and $ \|R_\beta^\delta\|\leq \beta^{-\frac{1}{2}} $; i.e. the operators are uniformly bounded independent of $ \delta $.
\end{lemma}
\begin{proof}
    For an arbitrary $ x \in X $ consider
    \begin{align*}
        \beta \|x\|_X^2 \leq \beta \|x\|_X^2 + \|A^\delta x\|_Y^2 = \dual{x,(\beta \mathcal I_X + (A^\delta)^\ast A^\delta)x}_X \leq \|x\|_X\|E^{-1}x\|_X,
    \end{align*}
    thus $ \|E\| \leq \beta^{-1} $.
    
    Note that $ E $ is self-adjoint and that we have $ R_\beta^\delta = E \circ (A^\delta)^\ast $. Let again $ x \in X $ and fix $ z = Ex $, then
    \begin{align*}
        \|(R_\beta^\delta)^\ast x\|_Y^2 &= \dual{A^\delta Ex,A^\delta Ex}_Y = \dual{z,(A^\delta)^\ast A^\delta z}_X \\
        &\leq \dual{z,(A^\delta)^\ast A^\delta z}_X + \beta\dual{z,z}_X = \dual{z,E^{-1}z}_X = \dual{Ex,x}_X \\
        &\leq \|Ex\|_X\|x\|_X \leq \beta^{-1}\|x\|_X^2,
    \end{align*}
    thus $ \|(R_\beta^\delta)^\ast\| \leq \beta^{-\frac12} $. As $ R_\beta^\delta $ is a bounded linear operator $ \|R_\beta^\delta\| = \|(R_\beta^\delta)^\ast\| $.
\end{proof}

With this we deal with the proof of the theorem.
\begin{proof}[Proof of Theorem \ref{thm:reg-strat}]
    We fix $ E = ((A^\delta)^\ast A^\delta + \beta\mathcal I_X)^{-1} $ as in Lemma \ref{lem:reg-strat}. 
    
    i) We note that
    \begin{align*}
        \|x - R_\beta^\delta y^\epsilon\|_X = \|x - R_\beta^\delta y + R_\beta^\delta(y - y^\epsilon)\|_X &\leq \|x- R_\beta^\delta y\|_X + \|R_\beta^\delta\|\|y-y^\epsilon\|_Y \\
        &\leq \|x- R_\beta^\delta y\|_X + \frac{\epsilon}{\sqrt{\beta}},
    \end{align*}
    the last inequality by applying Lemma \ref{lem:reg-strat}. We thus consider simply $ \|x-R_\beta^\delta y\|_X $ from here.
    \begin{align*}
        \|x - R_\beta^\delta y\|_X &= \|x - E\circ(A^\delta)^\ast Ax\|_X = \|E\circ\big[E^{-1} - (A^\delta)^\ast A\big]x\|_X \\
        &= \|E\circ\big[((A^\delta)^\ast A^\delta + \beta\mathcal I_X) - (A^\delta)^\ast A\big]x\|_X \\
        &= \|E\circ (A^\delta)^\ast [A^\delta-A]x + \beta E x\|_X  \addtocounter{equation}{1}\tag{\theequation} \label{eq:star} \\
        &= \|R_\beta^\delta[A^\delta-A]x + \beta E A^\ast w\|_X \\
        &\leq \|R_\beta^\delta\|\|A^\delta - A\|\|x\|_X + \beta\|E A^\ast{w}\|_X \leq \frac{\delta}{\sqrt{\beta}}\|x\|_X + \beta\|E A^\ast{w}\|_X 
    \end{align*}
    We consider now $ \beta\|EA^\ast{w}\|_X $,
    \begin{align*}
        \beta\|EA^\ast{w}\|_X &= \beta\|E((A^\delta)^\ast + A^\ast - (A^\delta)^\ast){w}\|_X\\
        &= \beta\|R_\beta^\delta{w} + E(A^\ast - (A^\delta)^\ast){w}\|_X \\
        &\leq \beta\|R_\beta^\delta\| \|{w}\|_X + \beta\|E(A^\ast-(A^\delta)^\ast){w}\|_X \\
        &\leq \sqrt{\beta}\|{w}\|_Y + \beta \|E\|\|(A-A^\delta)^\ast\|\|{w}\|_Y \\
        &\leq \sqrt{\beta}\|{w}\|_Y + \delta \|{w}\|_Y.
    \end{align*}
    Back-substituting yields i).
    
    ii) First, note that $ x \in \overline{\ran(A^\ast)} $ implies the existence of an $ x^\alpha \in \ran(A^\ast) $ satisfying $ \|x-x^\alpha\|_X < \alpha $ for any $ \alpha > 0 $. There is thus $ w^\alpha $ such that $ A^\ast w^\alpha = x^\alpha $.
    
    Consider then the term $ Ex $ in \eqref{eq:star} and expand
    \begin{align*}
        Ex = E(x^\alpha + x - x^\alpha) = Ex^\alpha + E(x-x^\alpha) = EA^\ast w^\alpha + E(x-x^\alpha).
    \end{align*}
    Substituting this into the above derivation we find
    \[
        \|x-R_\beta^\delta y^\varepsilon\|_X \leq  \frac{\delta}{\sqrt{\beta}}\|x\|_X + \sqrt{\beta}\|w^\alpha\|_Y + \delta\|w^\alpha\|_Y + \frac{\varepsilon}{\sqrt{\beta}} + \alpha
    \]
    We now first choose $ \beta = \beta(\delta,\varepsilon) $ as in i). Next, we choose $\alpha =  \alpha(\delta,\varepsilon,\beta) \to 0 $ such that the growth of $ \|w^\alpha\|_Y $ satisfies $ \max(\sqrt{\beta},\delta)\|w^{\alpha}\|_Y \to 0 $.
\end{proof}

We now adapt Theorem \ref{thm:reg-strat} to the particular problem:
\begin{corollary} \label{cor:reg-strat}
    Assume that $ m $, $ B\supseteq \Omega $ $ c $ , $ \widetilde{c} $, $K$ and $ \widetilde{K} $ are as in Section \ref{sec:wave}, that $ I = KH $ and that $ H \in \operatorname{Ran}(K^\ast) $. 
    If $ \beta \propto \|\widetilde{c}-c\|_{H^m(B)} $ there is a constant $ C > 0 $ independent of $ \widetilde{c} $ and $ \beta $ such that
    \[ \|H - \widetilde R_\beta I\|_{L^2(\Omega)} \leq C\|\widetilde{c}-c\|_{H^m(B)}^\frac{1}{2}, \]
    where $ \widetilde R_\beta = (\widetilde{K}^\ast \widetilde{K} + \beta\mathcal I)^{-1}\widetilde{K}^\ast $.
\end{corollary}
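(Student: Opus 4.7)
The plan is to combine the operator-error bound of Proposition \ref{prop:op-cont} with the core estimate that was already derived inside the proof of Theorem \ref{thm:reg-strat}, now specialized to exact data. Set
\[ \delta := C_1\|\widetilde{c}-c\|_{H^m(B)}, \]
where $C_1$ is the constant from Proposition \ref{prop:op-cont}. Then $\|\widetilde{K}-K\| \leq \delta$, so $\widetilde{K}$ is admissible as a perturbed operator $K^\delta$ in the sense of Theorem \ref{thm:reg-strat}. The hypothesis $H \in \operatorname{Ran}(K^\ast)$ supplies the source element $\overline{w} \in L^2(0,T)$ with $K^\ast\overline{w} = H$, which is the only other abstract assumption needed from Theorem \ref{thm:reg-strat}.

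Since $I = KH$ is noise-free, the $\varepsilon$-contribution in Theorem \ref{thm:reg-strat} drops out, and the chain of inequalities used in its proof yields directly
\[ \|H - R_\beta I\|_{L^2(\Omega)} \leq \frac{\delta}{\sqrt{\beta}}\,\|H\|_{L^2(\Omega)} + \sqrt{\beta}\,\|\overline{w}\|_{L^2(0,T)} + \delta\,\|\overline{w}\|_{L^2(0,T)}. \]
Next, I would exploit the hypothesis $\beta \propto \|\widetilde{c}-c\|_{H^m(B)}$ by writing $\beta = c_0\,\|\widetilde{c}-c\|_{H^m(B)}$, so $\beta$ and $\delta$ are of the same order. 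The first two terms on the right-hand side are then $O(\sqrt{\delta})$ immediately, while for the third term I would use that on the admissible set $\mathcal{A}_\lambda$ the quantity $\|\widetilde{c}-c\|_{H^m(B)}$ is bounded by a constant $M=M(\lambda)$, so that $\delta \leq \sqrt{M}\sqrt{\delta}$. Collecting the three terms gives the single bound $C\|\widetilde{c}-c\|_{H^m(B)}^{1/2}$.

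The only thing requiring real care is verifying that the constant $C$ is genuinely independent of $\widetilde{c}$ as claimed. This breaks into three checks: (i) $C_1$ from Proposition \ref{prop:op-cont} is $\widetilde{c}$-independent by construction; (ii) $\|H\|_{L^2(\Omega)}$ and $\|\overline{w}\|_{L^2(0,T)}$ depend only on the true problem $(c,\sigma,f,S)$, not on the approximation; (iii) the bound $M(\lambda)$ used to dominate $\delta$ by $\sqrt{\delta}$ is uniform over $\mathcal{A}_\lambda$. I expect (iii) to be the most easily overlooked point and hence the main—though very mild—obstacle: one must restrict attention to $\widetilde{c}$ in the admissible set so that the $\delta$-term can be absorbed into the $\sqrt{\delta}$-rate without polluting the constant.
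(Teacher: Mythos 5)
Your proposal is correct and follows exactly the route the paper intends: the paper presents the corollary as ``a direct consequence'' of the estimate $\|H - R_\beta^\delta I\|_W \leq \delta\beta^{-1/2}\|H\|_W + \sqrt{\beta}\|\overline{w}\|_V + \delta\|\overline{w}\|_V$ derived before Theorem \ref{thm:reg-strat}, combined with Proposition \ref{prop:op-cont} to set $\delta = C_1\|\widetilde{c}-c\|_{H^m(B)}$. Your extra care with point (iii) --- absorbing the $O(\delta)$ term into the $O(\sqrt{\delta})$ rate using the uniform bound on $\|\widetilde{c}-c\|_{H^m(B)}$ over $\mathcal{A}_\lambda$ --- is a detail the paper leaves implicit, and you handle it correctly.
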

\begin{proof}
    The result follows from Theorem \ref{thm:reg-strat} part i), taking $ K $ and $ \widetilde K $ as our operators $A$ and $ A^\delta $, together with Propositions \ref{thm:reg:q} and \ref{prop:op-cont} from Section \ref{sec:wave}.
\end{proof}
\begin{remark}
    If $ H \in \overline{\ran(K^\ast)}\setminus \ran(K^\ast) $  convergence is still granted by Theorem \ref{thm:reg-strat} part ii), but the rate is no longer guaranteed.
\end{remark}
The above demonstrates that, as the sound speed uncertainty vanish (i.e.\ in the limit $ \widetilde{c} \rightarrow c $) and the regularization parameter is chosen appropriately, the solution of \eqref{eq:H-opt:cont}, with operator $ \widetilde{K} $, converges to the true solution $ H $. This result can be combined with known stability results for the mapping $H \mapsto  \sigma $ (see e.g.\ \cite{alberti2018lectures}) showing that also the correct $\sigma$ can be obtained in the limit of vanishing sound speed uncertainty. 

\section{Forward computations and uncertainty modeling}
Simulations are done for a two-dimensional problem. In general we fix parameter values and follow the approach in \cite[Sec. 3.3]{jensen2019feasibility} though with minor deviations. In the subsections we sketch our approach.

\subsection{Forward models} \label{sec:fwd-mod}
\begin{figure}[htb]
    \begin{minipage}[c]{0.40\textwidth}
        \centering
        \gfx{height=0.7\textwidth}{Hx_s00_n2.png}~%
        \gfx{height=0.7\textwidth}{Hcbar.png}
    \end{minipage}\hfill
    \begin{minipage}[c]{0.60\textwidth}
        \caption{Reconstruction of $ H $ for boundary condition $ f(x,y) = x $ and 12 uniformly distributed wave sources. This reconstruction is done using a forward model with a homogeneous sound speed $ c $ from data generated with a homogeneous sound speed $ \widetilde c = 1.05c $. This illustrates star-like shape artifacts resulting from having only few waves, and the low-energy band close to the boundary resulting from the severely wrong mean sound speed assumption (here too low).
        } \label{fig:artifacts}
    \end{minipage}
\end{figure}

\begin{figure}[htb]
    \centering
    \gfx{height=0.28\linewidth}{rec1-true-s.png}~%
    \gfx{height=0.28\linewidth}{rec1-cb-S.png}\hspace{1cm}%
    \gfx{height=0.28\linewidth}{rec2-true-s.png}~%
    \gfx{height=0.28\linewidth}{rec2-cb-S.png}
    \caption{The two conductivity phantoms used for simulations; $ \sigma_1(x) $ on the left and $ \sigma_2(x) $ on the right.} \label{fig:t-sigma}
\end{figure}

We solve the wave equation \eqref{eq:p} using the \kwave{} package for \matlab{}. This is done on a regular square grid in a domain $ [-L,L]^2 \subset \R^2 $ and, to simulate the full domain $ \R^2 $, an absorbing boundary layer is added such that from within the subdomain $ [-L',L']^2 \subset [-L,L]^2 $ the solution is an approximation to the problem in the full space. Note that the domain of our electrical measurements $ \Omega $ is contained in $ [-L',L']^2 $. In \kwave{} we take $ L = 6\times 10^{-2} $ and $ L' = 4.5\times 10^{-2} $. The disc with radius $ 4\times 10^{-2} $ is then mapped to the electrical domain $ \Omega $, which is computationally taken as the unit disc.

The sound speed $ c $ is generated based on a breast tissue model \cite{reiser2013validation}. The model along with the changes introduced to accommodate our problem are outlined in Section \ref{sec:samp-schm}. The sound speed for generating the data has the form
\begin{align} \label{eq:c-formula}
    c(x;\theta) = c_{\rm{bg}} + \mu c_{\rm{bg}}s(x;\theta)
\end{align}
where $ s(x;\theta) $ is the sampled structured perturbation with $\theta$ the random variable. $ c_{\rm{bg}} = 1500$m/s is the background sound speed. $ \mu $ is a small scaling factor fixed at $ \mu = 0.05 $ in our simulations unless otherwise specified. We note that the density of the medium in \kwave{} is taken to be 1000.

We use 36 uniformly distributed wave-source positions in order to eliminate artifacts that would appear when too few waves are used; see Figure \ref{fig:artifacts}. Each transducer source consists of a sequence of point sources on the arc of $ \partial\Omega $. The transducer produces half a cycle of a 50 kHz tone burst.

The generated acoustics fields, all expressed in $ p(x,t) $, are interpolated on an unstructured triangulated mesh $ \mathcal M $ with 20100 nodes; $ \mathcal M $ representing $ \Omega $.

The electrical potentials are computed as in \cite{jensen2019feasibility} using \fenics{} \cite{fenics} with a $ \mathcal P_1 $ finite element basis on the mesh $ \mathcal M $. 
To ensure no other sources of uncertainty than the unknown sound speed, no noise components are added at this step. Moreover, as our primary interest is in the errors induced by the waves, we avoid any linearization error, from the approximation in \eqref{eq:I-exact}--\eqref{eq:I-linearization}, by generating our time series data directly by first forming the power density $H$, and then by integrating it against the waves on $ \Omega $ numerically in the linear expression \eqref{eq:I-linearization}.

The used conductivity phantoms are shown in Figure \ref{fig:t-sigma}. The simple phantom on the left is given by  $ \sigma_1(x) = 1 + \frac{1}{2}\chi_D(x) $, where $ D = B_\frac{1}{4}\left(0,\frac{3}{8}\right) $. The more challenging phantom on the right $ \sigma_2(x) $ is defined by inclusions made up from 7 polar rectangles on a smooth background. 
The  background is a polynomial given by
\begin{align*}
    \frac{2}{k}\operatorname{poly}(x_1,x_2) + \frac12, \quad \operatorname{poly}(x_1,x_2) = \left(1-x_1^2 - x_2^2\right)\left(\left(x_1-\frac{3}{4}\right)^2 + \left(x_2-\frac{3}{4}\right)^2\right)
\end{align*}
where $ k \equiv \max\{\operatorname{poly}(x_1,x_2) : \|(x_1,x_2)\|_2 \leq 1\} \approx 1.8272 $; the polar rectangles are defined in polar coordinates $(r,v),\; r_1 \leq r < r_2,\; v_1 \leq v < v_2$ with the values in Table \ref{tab:arc-regions}.
\begin{table}[ht]
    \centering
    \begin{tabular}{|c|c c|c c|c|}\hline
         layer & $ r_1 $ & $ r_2 $ & $ v_1 $ & $ v_2 $ & value \\\hline
         outer & $\frac57$ & $ \frac67 $ & $ -0.4167\pi$ & $ 0.1099\pi $ & 2.2646 \\[2pt]
         outer & $\frac57 $ & $ \frac67$ & $ 0.2106\pi$ & $ 0.8697\pi $ & 2.0845 \\[2pt]
         middle & $\frac37 $ & $ \frac47$ & $ 0.0057\pi$ & $ 0.4847\pi $ & 0.5461 \\[2pt]
         middle & $\frac37 $ & $ \frac47$ & $ 0.5840\pi$ & $ 1.2579\pi $ & 1.4765 \\[2pt]
         middle & $\frac37 $ & $ \frac47$ & $ 1.3731\pi$ & $ 1.8870\pi $ & 1.3523\\[2pt]
         inner & $\frac17 $ & $ \frac27$ & $ -0.0650\pi$ & $ 0.8526\pi $ & 2.3122 \\[2pt]
         inner & $\frac17 $ & $ \frac27$ & $ 1.0193\pi$ & $ 1.9350\pi $ & 1.8090 \\[2pt]\hline
    \end{tabular}
    \caption{The parameters for the different polar rectangles in the more complicated conductivity phantom seen on the right in Figure \ref{fig:t-sigma}.}
    \label{tab:arc-regions}
\end{table}

\subsection{Generation of random structures} \label{sec:samp-schm}
\begin{figure}
    \centering
    \gfx{width=0.30\linewidth}{sq_realization0000.png}~%
    \gfx{width=0.30\linewidth}{sq_realization0010.png}~%
    \gfx{width=0.30\linewidth}{sq_realization0020.png}
    \caption{Examples of realizations of $ s(x;\theta) $; $ (\text{\ccrule[246,230,32]{.9em}{.9em}}, \text{\ccrule[32,147,140]{.9em}{.9em}}, \text{\ccrule[69,5,89]{.9em}{.9em}}) = (1,0,-1) $. The white dashed line marks the boundary of the domain $ \Omega $ on which the AET-problem is considered.} \label{fig:real}
\end{figure}
We sample different structures $ s $ for the sound speed $ c $, as used in \eqref{eq:c-formula}, and generate data to observe the influence on the reconstructions. The sound speed is sampled based on the breast tissue model proposed on \cite{reiser2013validation}, though we make a couple of modifications to their proposed model. This model was used as a base since it provides interesting non-trivial variations with some actual structure to them, compared to for instance adding Gaussian noise to the sound speed, which has no structure. Also, a proposed application area for AET is mammography, which makes the model topical. The modifications to the model are made for two primary reasons. First, our problem is in 2D where the original model is created for 3D, hence we move it to 2D. Second, the original model creates smooth structures; to better control the amount of disruptive structure we add to our acoustic medium we here create piecewise constant structures instead.

The remainder of this section details the model used to generate the random structures. The end product is a structure function $ s(x;\theta) $, where $ \theta $ is our random variable. Hence to create different structures we sample different $ \theta $.

We define constants $ f_0 = 20 $, $ \ell = 25 $, $ c_0 = 0.5 $, $ c_1 = 1 $ and an $ N\times N $-grid with points $ \xi_{jk} := (\xi_j,\xi_k) \in [-\ell,\ell]^2, 1\leq j,k \leq N $, $ \xi_j = \ell\left(2\frac{j-1}{N-1}-1\right) $. Then we draw uniformly distributed phase samples for each node in the grid, $ \theta_{jk} \sim \mathcal U(-\pi,\pi) $. We define the function $ V_\beta(\xi; \theta) $, as
\begin{equation}
    V_\beta(\xi; \theta) = \begin{cases}
        c_0, & |\xi| = 0,\\
        c_1|\xi|^{-\frac\beta2}e^{-i\theta}, & 0 < |\xi| < f_0, \\
        0, & \text{otherwise},
    \end{cases}    
\end{equation}
and evaluate at each grid point, $ v_{jk} = V_\beta(\xi_{jk};\theta_{jk}) $. 
We take the discrete inverse 2D Fourier transform of $ v_{jk} $, thus defining $ q_{jk} = |\mathcal F^{-1}_\textup{discrete}v_{jk}| $. Define a region $ U \subseteq [-\ell,\ell]^2 $, where we want to control the structures, and let $ \mathcal J_U = \{(j,k) : (\xi_j,\xi_k) \in U\} $ and
\[ 
    r(\gamma) := \argmin_{r\in\R} \left|\gamma - \frac1{|\mathcal J_U|}\sum_{(j,k)\in \mathcal J_U}\max(\operatorname{sign}(r-q_{jk}),0) \right|. 
\]
That is, $ r(\gamma) $ is the height at which to make a cut such that the ratio of grid points in $ U $ with values less than $ r(\gamma) $ compared to the total amount of grid points in $ U $ is as close to $ \gamma $ as possible. We then put
\begin{equation*}
    \widehat{q}_{jk} = \begin{cases}
        1 & \text{if $ q_{jk} < r(\gamma) $}, \\
        0 & \text{otherwise}.
    \end{cases}
\end{equation*}
We thus have a random (due to the distribution on $ \theta_{ij} $) map $ (\beta,\gamma) \mapsto \widehat{Q}_{jk}(\beta,\gamma) := \widehat{q}_{jk} $. From here we take some liberties in constructing our random structured sound speed. We proceed as follows: Define $ \beta_0 = 3.3 $, $ \beta_1 = 2.8 $ and $ \gamma = 0.35 $. We choose $ U \subset \Omega $ slightly away from the boundary of $ \Omega $, $ U $ is here a disc with radius $ \frac45 $ the radius of $ \Omega $, to ensure that the primary amount of variations will be exhibited in the central part of $ \Omega $. Then we put 
\begin{equation*}
    s_{jk} := Q_{jk}(\beta_0,\gamma) - Q_{jk}(\beta_1,\gamma).
\end{equation*}
Note that $ s_{jk} $ takes only the discrete values -1, 0 and 1. We define $ s(x;\theta) $ as the linear grid interpolation of $ s_{jk} $. Examples of draws from $ s(x;\theta) $ for some different realizations of $ \theta $ are illustrated in Figure \ref{fig:real}.

\begin{remark}
Note that $ c(x;\theta) $, computed as in \eqref{eq:c-formula}, may have an average slightly different from $ c_\textup{bg} $. Since the range of $ s_{jk} $ is discrete, $ c(x;\theta) $ may have very steep slopes depending on the discretization. A smoothed version may be obtained by convolution with a mollifier function, though we do not actually do that in our test cases. 
\end{remark}

\begin{remark}
We scale the spatial domain of $ s(x;\theta) $ to coincide with the square domain we have for wave equation. This is not an issue as the choice of value for $ \ell $ in the sampling scheme is unitless and we could move everything relative to a different scale and obtain the same $ s(x;\theta) $; e.g. let $ \alpha > 0 $ and consider a new $ \ell \to \alpha \ell $, then we should use $ f_0 \to \alpha f_0 $ and scale $ |\xi| \to |\xi/\alpha| $ in $ V_\beta(\xi;\theta) $ to compensate.
\end{remark}

\section{Numerical results}
 
Figures \ref{fig:reconstructions} and \ref{fig:reconstructions2} illustrate reconstructions of power densities for different boundary conditions (rows 1-3) and the conductivity (row 4) for the phantoms $ \sigma_1(x) $ and $ \sigma_2(x) $ respectively. The power densities are reconstructed by solving \eqref{eq:H-opt} and then the conductivity is found by solving \eqref{eq:opt-precond}. Rows 1--3 are power densities corresponding to boundary conditions $ f(x_1,x_2) = x_1$, $ x_2 $, $ (x_1+x_2)/\sqrt{2} $ respectively. The first column contains (for comparison) reconstructions of the ``best possible case'' where $ s(x) = 0 $, i.e. the correct sound speed is used for solving the inverse problem.

The test case in Figure \ref{fig:reconstructions} purposely uses a simple conductivity to clearly illustrate the effect of the added structure from the $ s(x) $ term. Comparatively the test case in Figure \ref{fig:reconstructions2} is a more complicated phantom with various regions and smooth areas serves to demonstrate that the discernible reconstructions obtained for the simpler phantom are replicatable even when the target is more complicated; more separate regions, smoothness and higher contrasts.

\begin{figure}
    \centering
    \gfx{height=0.23\linewidth}{rec1-inv-crim-Hx.png}~%
    \gfx{height=0.23\linewidth}{rec1-00-Hx.png}~%
    \gfx{height=0.23\linewidth}{rec1-10-Hx.png}~%
    \gfx{height=0.23\linewidth}{rec1-20-Hx.png}~~%
    \gfx{height=0.23\linewidth}{rec1-cb-H.png}\\
    \gfx{height=0.23\linewidth}{rec1-inv-crim-Hy.png}~%
    \gfx{height=0.23\linewidth}{rec1-00-Hy.png}~%
    \gfx{height=0.23\linewidth}{rec1-10-Hy.png}~%
    \gfx{height=0.23\linewidth}{rec1-20-Hy.png}~~%
    \gfx{height=0.23\linewidth}{rec1-cb-H.png}\\
    \gfx{height=0.23\linewidth}{rec1-inv-crim-Hxpy.png}~%
    \gfx{height=0.23\linewidth}{rec1-00-Hxpy.png}~%
    \gfx{height=0.23\linewidth}{rec1-10-Hxpy.png}~%
    \gfx{height=0.23\linewidth}{rec1-20-Hxpy.png}~~%
    \gfx{height=0.23\linewidth}{rec1-cb-H.png}\\
    \gfx{height=0.23\linewidth}{rec1-inv-crim-S.png}~%
    \gfx{height=0.23\linewidth}{rec1-00-S.png}~%
    \gfx{height=0.23\linewidth}{rec1-10-S.png}~%
    \gfx{height=0.23\linewidth}{rec1-20-S.png}~~%
    \gfx{height=0.23\linewidth}{rec1-cb-S.png}\\
    \caption{Reconstructions of power densities and conductivities corresponding to $ \sigma_1(x) $ and different realizations of sound speeds; $ \mu = 0.05 $. Rows 1--3 are power densities corresponding to the boundary conditions $ f(x_1,x_2) = x_1$, $ x_2 $, $ (x_1+x_2)/\sqrt{2} $ respectively. Row 4 is the reconstructed conductivity corresponding to the three power densities above. For comparison column 1 shows a case with $ s(x) = 0 $; i.e. with the same forward and reconstruction operator. Columns 2--4 correspond to the particular sound speed realizations shown in Figure \ref{fig:real}; left to right. } \label{fig:reconstructions}
\end{figure}

\begin{figure}
    \centering
    \gfx{height=0.23\linewidth}{rec2-inv-crim-Hx.png}~%
    \gfx{height=0.23\linewidth}{rec2-00-Hx.png}~%
    \gfx{height=0.23\linewidth}{rec2-10-Hx.png}~%
    \gfx{height=0.23\linewidth}{rec2-20-Hx.png}~~%
    \gfx{height=0.23\linewidth}{rec2-cb-H.png}\\
    \gfx{height=0.23\linewidth}{rec2-inv-crim-Hy.png}~%
    \gfx{height=0.23\linewidth}{rec2-00-Hy.png}~%
    \gfx{height=0.23\linewidth}{rec2-10-Hy.png}~%
    \gfx{height=0.23\linewidth}{rec2-20-Hy.png}~~%
    \gfx{height=0.23\linewidth}{rec2-cb-H.png}\\
    \gfx{height=0.23\linewidth}{rec2-inv-crim-Hxpy.png}~%
    \gfx{height=0.23\linewidth}{rec2-00-Hxpy.png}~%
    \gfx{height=0.23\linewidth}{rec2-20-Hxpy.png}~%
    \gfx{height=0.23\linewidth}{rec2-10-Hxpy.png}~~%
    \gfx{height=0.23\linewidth}{rec2-cb-H.png}\\
    \gfx{height=0.23\linewidth}{rec2-inv-crim-S.png}~%
    \gfx{height=0.23\linewidth}{rec2-00-S.png}~%
    \gfx{height=0.23\linewidth}{rec2-10-S.png}~%
    \gfx{height=0.23\linewidth}{rec2-20-S.png}~~%
    \gfx{height=0.23\linewidth}{rec2-cb-S.png}\\
    \caption{Reconstructions of power densities and conductivities corresponding to $ \sigma_2(x) $ and different realizations of sound speeds; $ \mu = 0.05 $. Rows 1--3 are power densities corresponding to the boundary conditions $ f(x_1,x_2) = x_1$, $ x_2 $, $ (x_1+x_2)/\sqrt{2} $ respectively. Row 4 is the reconstructed conductivity corresponding to the three power densities above. For comparison column 1 shows a case with $ s(x) = 0 $; i.e. with the same forward and reconstruction operator. Columns 2--4 correspond to the particular sound speed realizations shown in Figure \ref{fig:real}; left to right. } \label{fig:reconstructions2}
\end{figure}

Looking at columns 2 through 4 in Figure \ref{fig:reconstructions}, we clearly see the propagation of the model error to the power density, $ H(x) $, reconstructions, though, due to the complexity of the error propagation, we remark how the error features have no obvious resemblance to the structures introduced in the sound speed for the data generation; we refer back to Figure \ref{fig:real}. A number of these error features seemingly disappear again when moving on to the reconstruction of the conductivity, $ \sigma(x) $, though clearly the background variations are notable, and in particular near the boundary we find artifacts. The latter should not be surprising, as the mean sound speed might be slightly different from the constant $ c_\textup{bg} $ used in recovery, which during this study has been observed to cause structural errors close to the boundary; we refer again to the example in Figure \ref{fig:artifacts}. The inclusion, however, stands out quite clearly, which we attribute to the high and low peaks around the inclusion area present in all the power density reconstructions. Similar encircling high-low peak structures do not appear elsewhere attributing to the non-presence of other inclusions introduced by model errors.

Similar phenomenons as in Figure \ref{fig:reconstructions} appear in the reconstructions in Figure \ref{fig:reconstructions2}, where the reconstructed power densities clearly exhibit disruptions of a similar fashion. The power densities are mostly concentrated near the north east area, which is due to the higher contrasts in conductivity in that area. In general, despite the disruptive model errors introduced, the reconstructions remain very good with both shapes and structures, along with smooth features, recognizable.

\begin{figure}
    \centering
    \gfx{height=0.23\linewidth}{Hx_s00.png}~%
    \gfx{height=0.23\linewidth}{Hx_s01.png}~%
    \gfx{height=0.23\linewidth}{Hx_s05.png}~%
    \gfx{height=0.23\linewidth}{Hx_s10.png}~~%
    \gfx{height=0.23\linewidth}{cbar.png}
    \caption{Reconstructions of power densities corresponding to different values of noise level $ \mu $. The power density is the one corresponding to the boundary condition $ f(x_1,x_2) = x_1 $ and the sound speed structure $ s(x) $ used is the left-most in Figure \ref{fig:real}. The values used for $ \mu $ are (left-to-right) 0.00, 0.01, 0.05 and 0.10.} \label{fig:var-lvl}
\end{figure}

We illustrate in Figure \ref{fig:var-lvl} the change in the reconstructed power density relative to the level of difference in sound speed. In the figure progressing from left to right, the scale of the structured variation, controlled by $ \mu $, is scaled up. In the left-most reconstruction $ \mu = 0 $, thus the only error in the sound speed is within the inclusion. The third reconstruction has $ \mu = 0.05 $ and we note that this is the same reconstruction as row 1--column 2 in Figure \ref{fig:reconstructions}. This image demonstrates quite nicely the theory developed in the preceding sections. 

\begin{figure}
    \centering
    \gfx{height=0.26\linewidth}{mean-Hx.png}~%
    \gfx{height=0.26\linewidth}{mean-Hy.png}~%
    \gfx{height=0.26\linewidth}{mean-Hxpy.png}~~%
    \gfx{height=0.26\linewidth}{mean-H-cbar.png}\\
    \gfx{height=0.26\linewidth}{std-Hx.png}~%
    \gfx{height=0.26\linewidth}{std-Hy.png}~%
    \gfx{height=0.26\linewidth}{std-Hxpy.png}~~%
    \gfx{height=0.26\linewidth}{std-H-cbar.png}\\
    \gfx{height=0.26\linewidth}{mean-S.png}~~%
    \gfx{height=0.26\linewidth}{mean-S-cbar.png}\hspace{1cm}%
    \gfx{height=0.26\linewidth}{std-S.png}~~%
    \gfx{height=0.26\linewidth}{std-S-cbar.png}
    \caption{Node-wise means and standard deviations of reconstructions from different samples; $ \mu = 0.05 $. The upper row contains, left to right, the mean power densities for the boundary conditions $ f(x_1,x_2) = x_1$, $ x_2 $, $ (x_1+x_2)/\sqrt{2} $ respectively. The second row contains the corresponding standard deviations, and finally the last row has the mean conductivity on the left and standard deviation on the right. 150 samples were used.} \label{fig:mean-std}
\end{figure}

We present in Figure \ref{fig:mean-std} the mean and standard deviations, taken point-wise, of the power density and conductivity reconstructions from multiple different realizations of $ s(x;\theta) $. The means illustrate that our sampling does not appear to be biased in promoting non-existing features, which is to be expected but also nice to verify. The standard deviations also highlight some expected features, for example it sky-rockets near the boundary, which is in correspondence with our observations from the test cases. This is due to the increasing misalignment between the true wave $ p $ and the wave $ \widetilde{p}$  used for reconstruction as time increases; we refer again to Figure \ref{fig:artifacts}.

In a simplified scenario, if the true sound speed is slightly higher, $ p $ will have left the domain before $ \widetilde{p} $ and the reconstruction algorithm will see no signal as $ \widetilde{p} $ traverses the last stretch towards the boundary. Thus, the reconstructed power density will exhibit a low-valued zone. On the other hand, if the true sound speed is slightly lower, a reverse phenomenon happens and the reconstructed power density exhibits high values in the area. This phenomenon is very obvious when the error in $ \widetilde{p} $ is exacerbated by deliberately shifting the used sound speed by some constant.

Also, the boundary of the conductivity inclusion exhibits moderately higher standard deviation, which is not surprising when looking at the conductivity reconstructions in Figure \ref{fig:reconstructions}; here the inclusion boundary shifts a bit from one to the other. We note that 150 sampled structures $ s(x;\theta) $ were used for computing the means and standard deviations.

\section{Discussion and conclusion}
As proposed, in this paper we have explored the effect in AET of uncertainty in the sound speed and  the wave propagation. In Propositions \ref{thm:reg:q} and \ref{prop:op-cont} we have established bounds for the deviation of the wave and the operator based on variations in the sound speed coefficient. This guarantees that, for sufficiently small variations in the sound speed, the deviations scale accordingly. Considering how the error in the wave propagates to the operator, we demonstrated in Theorem \ref{thm:reg-strat} a regularization strategy for model error and used the formerly established propositions to show in Corollary \ref{cor:reg-strat} the applicability to the \aet{} problem.

In numerical simulations we found that the theory matches observations quite nicely, in particular as illustrated in Figure \ref{fig:var-lvl}, where we observe directly how the reconstruction improves as the difference between the real and estimated waves decreases. It is of course important to point out that, as mentioned in Remark \ref{rem:H-opt-beta}, in this study we perform a search for a good $ \beta $ value. 
In truth, this is a bit artificial, but it is necessary to be able to fairly compare the different end-results. Also, while we do not give the optimal values for $ \beta $ here, for the reconstructions in Figure \ref{fig:var-lvl} they start at about $ 10^{-5} $ (right-most) and decrease with $ \mu $ towards zero; the leftmost being about $ 10^{-8} $.

We saw in Figure \ref{fig:reconstructions} how the variations in realizations of the sound speed can influence the reconstructed power density quite heavily, propagating the operator error to the reconstruction. It is, however, notable how little of this error continues into the reconstructed conductivity, where the inclusion is quite convincingly reconstructed. We conjecture that this kind of error in the reconstructed power density is not too significant, probably because the features across all the power densities corresponding to the different boundary conditions are not collectively producible by the model $ H[\sigma] $  from a single choice of conductivity.

We believe that these results are quite promising for \aet{} showing that, even if the wave is not known exactly, this will not necessarily pose a huge problem for the final recovery of the conductivity, though it might produce certain irregular and artificial structures in the reconstructed power densities. We expect that the obtained results may carry over to other hybrid tomography problems such as Photo-Acoustic tomography.


\section*{Acknowledgement}
KK was supported by The Villum Foundation (grant no. 25893). BCSJ was supported by the Academy of Finland (grant no. 320022).


\bibliographystyle{siam}
\bibliography{bibliography}

\end{document}